\theoremstyle{plain}
\newtheorem{theorem}{Theorem}[]
\newtheorem{lemma} [theorem]{Lemma}
\newtheorem{proposition} [theorem]{Proposition}
\newtheorem{corollary} [theorem]{Corollary}
\theoremstyle{definition}
\newtheorem{definition}{Definition}
\theoremstyle{remark}
\newtheorem{rem}{Remark}
\newtheorem{Example}{Example}
\begin{document}

\author{Layth M. Alabdulsada }

\address{Layth M. Alabdulsada,  Institute of Mathematics, University of Debrecen, H-4002 Debrecen, P.O. Box 400, Hungary}
\email{layth.muhsin@science.unideb.hu}

\title{On the class of weakly almost contra-$T^*$-continuous functions}

\subjclass[2000]{54C05, 54C08, 54C10} \keywords{$T^*$-open sets, approximately $T^*$-regular irresolute, contra $T^*$-regular graph, weakly almost contra-$T^*$-continuous}

\bibliographystyle{alpha}

\begin{abstract}
The aim of this paper is to introduce and investigate a new class of functions called weakly almost contra-$T^*$-continuity which is defined as a
function from an operator topological space $(X, \tau, T)$ into an arbitrary topological space $(Y, \delta)$. Furthermore, some new characterizations, several basic propositions are
proved and some
relevant counterexamples are provided.   \end{abstract}
\maketitle

\section{Introduction}
In the literature, a number of generalizations of open sets and its continuous functions have been considered. Indeed, many mathematicians worked in this area and made great contributions to develop several types of almost contra-continuous and weakly almost contra-continuous functions. These functions which are defined between two an arbitrary topological spaces have been discussed extensively in the literature.  For general reference, we refer the reader to J. Dontchev \cite{D1} in 1996, J. Dontchev and T. Noiri \cite{D2} in 1999, M. Caldas and S. Jafari, \cite{C1a} in 2001 and E. Ekici \cite{E1} in 2004.
C. W. Baker studied and developed several types of weakly contra-continuous functions (see for instance \cite{B}, \cite{B1}).  Moreover, many of the related concepts studied well such that this subject has been received much attention in the last decade. Among others, see \cite{E1}, \cite{E2}, \cite{E3}, \cite{E4} and \cite{E5}.

H. J. Mustafa et al. used a different technique to define the continuity of functions from an operator topological space $(X, \tau, T)$, being a topological space with an operator $T$ associated with the topology $\tau$, into an arbitrary topological space $(Y, \delta)$, we refer the reader to \cite{M2} \cite{M3}. Using the concept of $T^*$-open set in \cite{M3} they introduced and studied almost contra-$T^*$-continuous functions, several properties and characterizations of these functions are considered. In this paper, we continue this line to explore a new approach to weakly almost contra-continuity such that our goal is to introduce some definitions and investigate various properties of a new category of functions called weakly almost contra-$T^*$-continuous in topological spaces via utilizing the concept of $T^*$-open set.

In the sequel, we will present a number of concepts which are linked to our investigations. First, in Section 2 we give the basic definitions and notations. Afterward, in Section 3, we will pay our attention to discuss weakly almost contra-$T^*$-continuous functions and its relationships to several other close concepts. The following are the main results of this paper:
\begin{itemize}
    \item [(1)] Let $f: (X, \tau, T) \to (Y, \delta)$ be a function where $T(S) = \mathrm{Int(Cl}(S))$. Then $f$ is weakly almost contra-$T^*$-continuous if and only if, whenever $S$ is regular closed in $Y$, $V$ is regular open subset of $Y$, and $S \subseteq V$, then $\mathrm{Cl(Int}(f^{-1}(S)))\subseteq f^{-1}(V)$.
   \item [(2)] If $f: (X, \tau, T) \to (Y, \delta)$ is an almost-$T^*$-continuous function, then $f$ is weakly almost contra-$T^*$-continuous.
   \item [(3)] If $f: (X, \tau, T) \to (Y, \delta)$ is a weakly almost contra-$T^*$-continuous function, where $(Y, \delta)$ is an extremely disconnected space, then $f$ is almost-$T^*$-continuous.
   \item [(4)] Let $f$  be a function from an operator topological space $(X, \tau, T)$ into an extremely disconnected space $(Y, \delta)$. Then the weakly almost contra-$T^*$-continuity is equivalent to the almost-$T^*$-continuity.
   \item [(5)] If $f: (X, \tau, T) \to (Y, \delta)$ is an almost contra-$T^*$-continuous function, then $f$ is weakly almost contra-$T^*$-continuous.
   \item [(6)] Suppose that $f: (X, \tau, T) \to (Y, \delta)$ is a weakly almost contra-$T^*$-continuous function, then $f$ is slightly contra-$T^*$-continuous.
\item [(7)] Let $f : (X, \tau, T) \,\to \, (Y, \delta)$  is a weakly almost contra-$T^*$-continuous surjection and let $Y$ be a $\Sigma$-space. If $X$ is contra-$T^*$-compact then $Y$ is $R$-compact.
\item [(8)] If the function $f : (X, \tau, T) \,\to \, (Y, \delta)$ is weakly almost contra-$T^*$-continuous and $Y$ is Urysohn, then $G(f)$ has a $T^*$-regular and contra $T^*$-regular graph as well.
\item [(9)] Let $f : (X, \tau, T) \to (Y, \delta)$ be weakly almost contra-$T^*$-continuous and the images of g$T^*$r-closed sets are regular closed, then $f$ is a$T^*$r-irresolute.
\item [(10)] If $f : (X, \tau, T) \to (Y, \delta)$ is almost g$T^*$r-continuous and a$T^*$r-irresolute, then $f$ is weakly almost contra-$T^*$-continuous.
\end{itemize}

\section{Preliminaries}

In this section, we will present the definitions and the basic concepts that play an important role in this paper. The closure of $S$ will be denoted by $\mathrm{Cl}(S)$. The interior of $S$ will be denoted by $\mathrm{Int}(S)$.

\begin{definition}
 A subset $S$ of a topological space $(X, \tau)$ is said to be:
  \begin{itemize}
    \item [$\bullet$] {\em regular open} if $S = \mathrm{Int( Cl}(S))$, {\em regular closed} if $S = \mathrm{Cl(Int}(S))$ \cite{ST}.
    \item [$\bullet$] {\em pre-open} if $S \subseteq \mathrm{Int( Cl}(S))$, the complement of pre-open is a {\em pre-closed} \cite{M1}.
    \item [$\bullet$] {\em semi-open} if $S \subseteq \mathrm{Cl( Int}(S))$, the complement of semi-open is a {\em semi-closed} \cite{L1}.
    \item [$\bullet$] {\em $\alpha$-open} if $S \subseteq \mathrm{Int( Cl( Int}(S)))$, the complement of $\alpha$-open is a {\em $\alpha$-closed} \cite{NJ}.
    \item [$\bullet$] {\em $\beta$-open} if $S \subseteq \mathrm{Cl( Int( Cl}(S)))$, the complement of $\beta$-open is a {\em $\beta$-closed} \cite{A1}.
  \end{itemize}

\end{definition}
The $\beta$-closure of a set $S$ denoted by $\beta\mathrm{Cl}(S)$,
is the intersection of all $\beta$-closed sets containing $S$. The $\beta$-interior of a set $S$ denoted by $\beta\mathrm{Int}(S)$, is the union of all $\beta$-open sets contained in $S$. The preclosure, preinterior, semiclosure and semiinterior of a set $S$ denoted by $\mathrm{pCl}(S), \mathrm{pInt}(S), \mathrm{sCl}(S)\ \text{and} \ \mathrm{sInt}(S)$, respectively, are defined analogously.
We say that $V$ is clopen subset if $V$ is both open and closed.
Furthermore, we have for any set $S$ that $$\mathrm{pCl}(S) = S \cup \mathrm{Cl(Int}(S)),$$ for more details see \cite{E1}.
\begin{definition}
  A function $f: (X, \tau) \to (Y, \delta)$ is said to be {\em continuous (pre-continuous \cite{M1}, semi-continuous \cite{L1}, $\alpha$-continuous \cite{N31a}, $\beta$-continuous \cite{A1}, resp}.) if $f^{-1}(V)$ is open (pre-open, semi-open, $\alpha$-open, $\beta$-open, resp.) in $X$ for each open subset $V \subseteq Y$.
\end{definition}

\begin{definition}
  A function $f: (X, \tau) \to (Y, \delta)$ is said to be {\em contra-continuous \cite{D1} (contra-pre-continuous \cite{J1}, contra-semi-continuous \cite{D2}, contra-$\alpha$-continuous \cite{J2}, contra-$\beta$-continuous \cite{C1a}, resp}.) if $f^{-1}(V)$ is closed (pre-closed, semi-closed, $\alpha$-closed, $\beta$-closed, resp.) in $X$ for each open subset $V \subseteq Y$.
\end{definition}

\begin{definition}
  A function $f: (X, \tau) \to (Y, \delta)$ is said to be {\em almost contra-continuous \cite{S1a}(almost contra-pre-continuous \cite{E1}, almost contra-semi-continuous \cite{I1}, almost contra-$\alpha$-continuous \cite{N311}, almost contra-$\beta$-continuous \cite{C1a}, resp.)} if $f^{-1}(V)$ is closed (pre-closed, semi-closed, $\alpha$-closed, $\beta$-closed, resp.) in $X$ for every regular open $V$ of $Y$.
\end{definition}

\begin{definition}
 A function $f: (X, \tau) \to (Y, \delta)$ is said to be {\em weakly contra-continuous \cite{B} (weakly contra-pre-continuous \cite{B}, weakly contra-$\beta$-continuous\cite{B1}, resp.)} provided that, whenever $S \subseteq V \subseteq Y$, $S$ is closed in $Y$, and $V$ is open in $Y$, then $\mathrm{Cl}f^{-1}(S)\subseteq f^{-1}(V)$ ($\mathrm{pCl}f^{-1}(S)\subseteq f^{-1}(V)$, $\mathrm{\beta Cl}f^{-1}(S)\subseteq f^{-1}(V)$, resp.) in $X$.
 \end{definition}

\begin{definition} \cite{M2}
 Let $(X, \tau)$ be a topological space  and $P(X)$ be the power set of $X$. A function $T: P(X) \to P(X)$ is said to be an operator associated with topology $\tau$ on $X$ if $U\subseteq T(U)$ for all $U \in \tau$ and the triple $(X, \tau, T)$ is called an {\em operator topological space}.
\end{definition}

\begin{Example}\item
\begin{itemize}
  \item [$\bullet$] If $T$ is the identity operator, i.e., $T(S)= S$, then the triple $(X, \tau, T)$ will reduces to $(X, \tau)$, thus the operator topological space is the ordinary
topological space.
  \item [$\bullet$] Let $(X, \tau)$ be any topological space and function $T: P(X) \to P(X)$ such that $T(S) := \mathrm{Int(Cl}(S))$ for any $S \subseteq X$.
  Notice that if $U$ is open in $X$, then $$U \subseteq \mathrm{Int(Cl}(U)) = T(U).$$ Consequently, $T$ is an operator associated with the topology $\tau$ on $X$ and the triple
$(X, \tau, T)$ is an operator topological space.
\end{itemize}

\end{Example}

\begin{definition} \cite{M3}
 Let $(X, \tau, T)$ be an operator topological space and $S \subseteq X $, then $S$ is said to be {\em $T^*$-open} if $S \subseteq T(S)$ (observe that $S$ not necessarily open).
 The complement of $T^*$-open is called {\em $T^*$-closed}.

\end{definition}
 \begin{rem}
 \item
   \begin{itemize}
    \item [$\bullet$] $T^*\mathrm{Cl}(S) := \cap \ \{ U \mid  U \ \text{is}\  T^*\text{-closed},\ U \supseteq S \}$,
    \item [$\bullet$] if $T(S) = \mathrm{Int(Cl}(S))$, where $S \subseteq X$ then $T^*$-open set is exactly the pre-open set and $T^*\mathrm{Cl}(S) \equiv \mathrm{pCl}(S)$,
    \item [$\bullet$] if $T(S) = \mathrm{Cl(Int}(S))$, where $S \subseteq X$ then $T^*$-open set is exactly the semi-open set and $T^*\mathrm{Cl}(S) \equiv \mathrm{sInt}(S)$.
  \end{itemize}
 \end{rem}

\begin{definition} A function $f: (X, \tau, T) \to (Y, \delta)$ is said to be  \cite{M3}:
  \begin{itemize}
    \item {\em $T^*$-continuous} if $f^{-1}(V)$ is $T^*$-open in $X$ for each open subset $V \in \delta$.
    \item {\em almost $T^*$-continuous} if $f^{-1}(V)$ is $T^*$-open in $X$ for every regular open subset $V$ of $Y$.
    \item {\em contra-$T^*$-continuous} if $f^{-1}(V)$ is $T^*$-closed in $X$ for every open subset $V$ of $Y$.
    \item {\em almost contra-$T^*$-continuous} if $f^{-1}(V)$ is $T^*$-closed in $X$ for every regular open subset $V$ of $Y$.
    \end{itemize}
\end{definition}

\begin{definition} A function $f: (X, \tau, T) \to (Y, \delta)$ is called:
\begin{itemize}
\item {\em slightly contra-$T^*$-continuous} if, for every $x \in X$ and every clopen subset $V$ of $Y$ containing
$f(x)$ there exists a $T^*$-closed subset $f^{-1}(V)$ of $X$ with $x \in f^{-1}(V)$ and $f(f^{-1}(V))\subseteqq V$.
    \item {\em weakly contra-$T^*$-continuous} if for any $S \subseteq V \subseteq Y$, $S$ closed, $V$ open in $Y$, we have $T^*$Cl$f^{-1}(S)\subseteq f^{-1}(V)$.

  \item {\em weakly almost contra-$T^*$-continuous} if for every regular open subset $V$ of $Y$ and every regular closed subset $S$ of $Y$ with $S \subseteq V$, we have $T^*$Cl$f^{-1}(S)\subseteq f^{-1}(V)$ in $X$.

\end{itemize}
 \end{definition}
 Some examples of weakly almost contra-$T^*$-continuous functions will be shown later.
\begin{definition}
Let $(X, \tau)$ be a topological space, then $X$ is said to be {\em extremely disconnected} \cite{A5} whenever the closures of open sets are open.
\end{definition}

\section{Weakly almost contra-$T^*$-continuous functions}

\begin{lemma}
  Let $f: (X, \tau, T) \to (Y, \delta)$ be a function where $T(S) = \mathrm{Int(Cl}(S))$. Then $f$ is weakly almost contra-$T^*$-continuous if and only if, whenever $S$ is regular closed in $Y$, $V$ is regular open subset of $Y$, and $S \subseteq V$, then $\mathrm{Cl(Int}(f^{-1}(S)))\subseteq f^{-1}(V)$.
\end{lemma}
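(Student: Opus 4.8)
The plan is to reduce the defining inclusion $T^*\mathrm{Cl}(f^{-1}(S)) \subseteq f^{-1}(V)$ to the stated condition by expanding the $T^*$-closure explicitly. Since the operator here is $T(S) = \mathrm{Int(Cl}(S))$, the Remark following the definition of $T^*$-open sets tells us that $T^*\mathrm{Cl}$ coincides with the pre-closure $\mathrm{pCl}$. Combining this with the identity recorded in Section 2, namely $\mathrm{pCl}(A) = A \cup \mathrm{Cl(Int}(A))$, I would first rewrite, for any regular closed $S$ and regular open $V$ with $S \subseteq V$,
\[
T^*\mathrm{Cl}(f^{-1}(S)) = \mathrm{pCl}(f^{-1}(S)) = f^{-1}(S) \cup \mathrm{Cl(Int}(f^{-1}(S))).
\]

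Next, I would observe that the inclusion $f^{-1}(S) \subseteq f^{-1}(V)$ holds for free from the hypothesis $S \subseteq V$, since taking preimages preserves inclusions. Therefore the union $f^{-1}(S) \cup \mathrm{Cl(Int}(f^{-1}(S)))$ is contained in $f^{-1}(V)$ if and only if the single remaining piece $\mathrm{Cl(Int}(f^{-1}(S)))$ is contained in $f^{-1}(V)$. This yields the pointwise equivalence
\[
T^*\mathrm{Cl}(f^{-1}(S)) \subseteq f^{-1}(V) \iff \mathrm{Cl(Int}(f^{-1}(S))) \subseteq f^{-1}(V),
\]
valid for each admissible pair $(S,V)$.

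Finally, I would quantify over all such pairs. By definition, $f$ is weakly almost contra-$T^*$-continuous precisely when the left-hand inclusion holds for every regular closed $S$ and every regular open $V$ with $S \subseteq V$; by the equivalence just established this is the same as demanding the right-hand inclusion for every such pair, which is exactly the stated condition, giving both directions at once. I do not expect a serious obstacle here: the only point to watch is that the \emph{free} inclusion $f^{-1}(S) \subseteq f^{-1}(V)$ genuinely comes from $S \subseteq V$ and not from any continuity assumption, so that discarding the term $f^{-1}(S)$ from the union is legitimate in both directions of the equivalence.
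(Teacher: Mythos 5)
Your proposal is correct and follows essentially the same route as the paper's own proof: identify $T^*\mathrm{Cl}$ with $\mathrm{pCl}$ via the Remark, expand $\mathrm{pCl}(f^{-1}(S)) = f^{-1}(S) \cup \mathrm{Cl(Int}(f^{-1}(S)))$, and use $S \subseteq V$ to discard the term $f^{-1}(S)$. In fact your writeup is tighter than the paper's: the paper only argues the forward implication explicitly (and misprints $\mathrm{Int(Cl}(\cdot))$ where $\mathrm{Cl(Int}(\cdot))$ is meant), whereas your observation that $f^{-1}(S) \subseteq f^{-1}(V)$ holds for free from $S \subseteq V$ is exactly what makes the equivalence run in both directions, supplying the converse the paper leaves implicit.
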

\begin{proof}
  Since $T(S) = \mathrm{Int(Cl}(S))$, $T^*$-openness plays the same role as pre-openness and $T^*\mathrm{Cl}(S) \equiv \mathrm{pCl}(S)$. Turning on to our proof, we know that $$
   \mathrm{pCl}(S) = S \cup \mathrm{Cl(Int}(S)).
  $$
  But pCl($f^{-1}(S)) \subseteq f^{-1}(V)$, then $f^{-1}(S) \cup \mathrm{Int(Cl(}f^{-1}(S))) \subseteq f^{-1}(V)$, therefore $\mathrm{Int(Cl(}f^{-1}(S))) \subseteq f^{-1}(V)$.
\end{proof}

\begin{rem} \label{SV}
\item
\begin{itemize}

\item  Every contra-$T^*$-continuous function are automatically weakly contra-$T^*$-continuous function, since $S \subseteq V $ implies $f^{-1}(S) \subseteq f^{-1}(V)$ and $T^*$$f^{-1}(S) \subseteq$ $T^*$$f^{-1}(V).$ If $f^{-1}(V)$ is $T^*$-closed, then $T^*$$f^{-1}(V) \subseteq f^{-1}(V)$, so we conclude that $T^*$$f^{-1}(S) \subseteq f^{-1}(V)$.
    \item  Following the same technique as above one can check that every weakly contra-$T^*$-continuous function is a weakly almost contra-$T^*$-continuous function.

\end{itemize}
\end{rem}

\begin{proposition}\label{1}
  If $f: (X, \tau, T) \to (Y, \delta)$ is an almost-$T^*$-continuous function, then $f$ is weakly almost contra-$T^*$-continuous.
\end{proposition}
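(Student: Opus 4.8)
The plan is to reduce the whole statement to the single observation that almost $T^*$-continuity already forces $f^{-1}(S)$ to be $T^*$-closed whenever $S$ is regular closed, after which the required inclusion is immediate. So I would begin by fixing a regular open set $V \subseteq Y$ and a regular closed set $S \subseteq Y$ with $S \subseteq V$; these are exactly the data appearing in the definition of weak almost contra-$T^*$-continuity, and the goal is to prove $T^*\mathrm{Cl}(f^{-1}(S)) \subseteq f^{-1}(V)$.

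The central step is to pass to complements so that the almost-$T^*$-continuity hypothesis can actually be applied. Using the standard duality $Y \setminus \mathrm{Cl}(\mathrm{Int}(S)) = \mathrm{Int}(\mathrm{Cl}(Y \setminus S))$, the complement $Y \setminus S$ of the regular closed set $S$ is regular open. Since $f$ is almost $T^*$-continuous, $f^{-1}(Y \setminus S) = X \setminus f^{-1}(S)$ is $T^*$-open, and hence $f^{-1}(S)$ is $T^*$-closed by definition of $T^*$-closedness.

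Once $f^{-1}(S)$ is known to be $T^*$-closed, I would invoke the elementary fact that a $T^*$-closed set coincides with its own $T^*$-closure: because $f^{-1}(S)$ itself belongs to the family of $T^*$-closed sets containing $f^{-1}(S)$, the intersection defining $T^*\mathrm{Cl}(f^{-1}(S))$ collapses to $f^{-1}(S)$. Finally, from $S \subseteq V$ and monotonicity of preimages we obtain $f^{-1}(S) \subseteq f^{-1}(V)$, whence $T^*\mathrm{Cl}(f^{-1}(S)) = f^{-1}(S) \subseteq f^{-1}(V)$, which is precisely the conclusion.

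I do not anticipate a serious obstacle here; the only points that require care are the set-theoretic duality between regular open and regular closed sets (needed to apply the hypothesis to $Y \setminus S$) and the fact that $T^*\mathrm{Cl}$ fixes $T^*$-closed sets. It is worth noting that the hypothesis $S \subseteq V$ enters only at the very last step: the real content of the argument is that almost $T^*$-continuity already shrinks the $T^*$-closure of $f^{-1}(S)$ down to $f^{-1}(S)$ itself, so that no genuine comparison between a closure and a preimage is ever required.
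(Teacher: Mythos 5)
Your proof is correct and follows essentially the same route as the paper: deduce that $f^{-1}(S)$ is $T^*$-closed from almost-$T^*$-continuity, conclude $T^*\mathrm{Cl}(f^{-1}(S)) \subseteq f^{-1}(S) \subseteq f^{-1}(V)$. You merely make explicit two steps the paper leaves implicit, namely the passage to the regular open complement $Y \setminus S$ and the fact that $T^*\mathrm{Cl}$ fixes $T^*$-closed sets.
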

\begin{proof}
   Suppose that $f$ is an almost-$T^*$-continuous function. First, fix that $S \subseteq V \subseteq Y$ such that $S$ is regular closed in $Y$ and $V$ is regular open in $Y$. Now, $f^{-1}(S)$ is $T^*$-closed, under the hypothesis that $f$ is an almost-$T^*$-continuous function and thus, $T^*$Cl$f^{-1}(S)\subseteq f^{-1}(S)\subseteq f^{-1}(V).$ Consequently, $f$ is a weakly almost contra-$T^*$-continuous function.
\end{proof}

 \begin{proposition}\label{2}
 If $f: (X, \tau, T) \to (Y, \delta)$ is a weakly almost contra-$T^*$-continuous function, where $(Y, \delta)$ is an extremely disconnected space, then $f$ is almost-$T^*$-continuous.
\end{proposition}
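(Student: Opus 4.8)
The plan is to prove that $f^{-1}(V)$ is $T^*$-open for each regular open set $V \subseteq Y$, which is exactly the definition of almost-$T^*$-continuity. The driving observation is that extreme disconnectedness collapses the distinction between regular open and clopen sets, so that both a set and its complement can serve simultaneously as the regular open and regular closed sets demanded by the hypothesis.

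First I would establish the structural fact hidden in the statement. Let $V$ be regular open, so $V = \mathrm{Int(Cl}(V))$ and in particular $V$ is open. Because $Y$ is extremely disconnected, $\mathrm{Cl}(V)$ is open, hence $\mathrm{Int(Cl}(V)) = \mathrm{Cl}(V)$ and therefore $V = \mathrm{Cl}(V)$; thus $V$ is clopen. It follows that $Y \setminus V$ is clopen too, and a one-line computation then gives $\mathrm{Int(Cl}(Y \setminus V)) = Y \setminus V = \mathrm{Cl(Int}(Y \setminus V))$, so that $Y \setminus V$ is at once regular open and regular closed.

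Next I would apply the weakly almost contra-$T^*$-continuity of $f$ to the complement. Taking the regular open set to be $Y \setminus V$ and the regular closed set $S$ to be $Y \setminus V$ as well (so that $S \subseteq Y \setminus V$ trivially), the definition yields $T^*\mathrm{Cl}(f^{-1}(Y \setminus V)) \subseteq f^{-1}(Y \setminus V)$. Since every set is contained in its own $T^*$-closure, this inclusion is an equality, so $f^{-1}(Y \setminus V)$ is $T^*$-closed. But $f^{-1}(Y \setminus V) = X \setminus f^{-1}(V)$, whence $f^{-1}(V)$ is $T^*$-open by definition. As $V$ was arbitrary among regular open subsets of $Y$, this shows $f$ is almost-$T^*$-continuous.

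The one point requiring care is the orientation of the argument: the hypothesis naturally produces $T^*$-\emph{closed} preimages, because the $T^*$-closure of a set always contains the set, whereas the conclusion requires $T^*$-\emph{open} preimages. Passing to $Y \setminus V$ is precisely what converts the available closed statement into the desired open one, and this maneuver is legitimate only because extreme disconnectedness forces $Y \setminus V$ to be regular open. Consequently I expect the clopen-ness step, namely verifying that regular open sets are closed in an extremely disconnected space, to be the substantive part, the remainder being a formal manipulation of complements.
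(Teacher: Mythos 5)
Your proposal is correct and follows essentially the same route as the paper: use extreme disconnectedness to conclude that the relevant regular set is clopen, apply the weak almost contra-$T^*$-continuity hypothesis with that set playing the role of both $S$ and $V$ to get $T^*\mathrm{Cl}(f^{-1}(\cdot)) \subseteq f^{-1}(\cdot)$, and conclude $T^*$-closedness of the preimage. The only cosmetic difference is that the paper starts from a regular closed set and leaves the final complementation implicit, whereas you start from a regular open set and carry out the complementation explicitly, which is if anything slightly more complete.
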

\begin{proof}
  Let $V$ be a regular closed subset of $Y$. Under the conditions stated above that $(Y, \delta)$ is extremely disconnected, $V$ is clopen and hence $V$ is also regular open. Therefore, $T^*$Cl$f^{-1}(V)\subseteq f^{-1}(V)$. By assumption $f$ is weakly almost contra-$T^*$-continuous functions, from what we conclude that $f^{-1}(V)$ is $T^*$-closed. Thus, $f$ is an almost-$T^*$-continuous function.
  \end{proof}

  One can prove immediately the next corollary from Propositions \ref{1} and \ref{2}.

  \begin{corollary}
   Let $f$  be a function from an operator topological space $(X, \tau, T)$ into an extremely disconnected space $(Y, \delta)$. Then the weakly almost contra-$T^*$-continuity is equivalent to the almost-$T^*$-continuity.
  \end{corollary}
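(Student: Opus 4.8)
The plan is to read the claimed equivalence as a pair of implications and to dispatch each direction by invoking an already-established proposition, so that no fresh argument is required. Throughout we work under the standing hypothesis that the codomain $(Y,\delta)$ is extremely disconnected, and we must produce both arrows of the biconditional between weakly almost contra-$T^*$-continuity and almost-$T^*$-continuity of $f$.

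First I would establish the implication from almost-$T^*$-continuity to weakly almost contra-$T^*$-continuity. This is precisely the content of Proposition~\ref{1}, and the key observation is that this proposition carries no hypothesis on $Y$ whatsoever; in particular it remains valid when $Y$ is extremely disconnected. Thus the forward direction is immediate, and in fact the extreme-disconnectedness assumption is not even consumed here.

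Next I would establish the reverse implication, from weakly almost contra-$T^*$-continuity to almost-$T^*$-continuity. This is exactly Proposition~\ref{2}, whose statement requires that $(Y,\delta)$ be extremely disconnected, which is the very hypothesis in force in the corollary. Hence this direction follows verbatim from Proposition~\ref{2}, with the extreme disconnectedness supplying (via the fact that regular closed sets become clopen, hence regular open) the mechanism that converts the inclusion $T^*\mathrm{Cl}f^{-1}(V)\subseteq f^{-1}(V)$ into $T^*$-closedness of the relevant preimages.

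Combining the two implications under the common assumption that $Y$ is extremely disconnected yields the asserted equivalence. There is no genuine obstacle here: the corollary is a formal consequence of the two preceding propositions, and the only point meriting a moment's care is the asymmetry in their hypotheses. Proposition~\ref{1} is unconditional on $Y$ and so applies a fortiori, while Proposition~\ref{2} demands exactly the extreme disconnectedness we have globally assumed; since that assumption is in force throughout the corollary, the two propositions fit together without friction and the biconditional follows at once.
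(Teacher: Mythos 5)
Your proof is correct and matches the paper's approach exactly: the paper likewise derives this corollary by combining Proposition~\ref{1} (which needs no hypothesis on $Y$) for one direction with Proposition~\ref{2} (which uses the extreme disconnectedness) for the other.
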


\begin{proposition}
  Let $f: (X, \tau, T) \to (Y, \delta)$ be an almost contra-$T^*$-continuous function, then $f$ is weakly almost contra-$T^*$-continuous.
\end{proposition}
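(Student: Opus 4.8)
The plan is to follow the same routing used in Remark \ref{SV} and in the proof of Proposition \ref{1}, but to exploit the $T^*$-closedness of the preimage of the \emph{outer} regular open set $V$ rather than that of the inner regular closed set $S$. First I would fix an arbitrary pair $S \subseteq V \subseteq Y$ with $S$ regular closed and $V$ regular open, and aim to establish the single inclusion $T^*\mathrm{Cl}(f^{-1}(S)) \subseteq f^{-1}(V)$ that the definition of weakly almost contra-$T^*$-continuity demands.

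The first key step is to apply the hypothesis of almost contra-$T^*$-continuity directly to $V$: since $V$ is regular open, $f^{-1}(V)$ is $T^*$-closed in $X$. Next I would use monotonicity of the preimage, namely that $S \subseteq V$ forces $f^{-1}(S) \subseteq f^{-1}(V)$, followed by monotonicity of the $T^*$-closure operator, which gives $T^*\mathrm{Cl}(f^{-1}(S)) \subseteq T^*\mathrm{Cl}(f^{-1}(V))$.

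The final step is to collapse the right-hand side: because $f^{-1}(V)$ is already $T^*$-closed, it is its own $T^*$-closure, so $T^*\mathrm{Cl}(f^{-1}(V)) = f^{-1}(V)$. Chaining the two inclusions then yields $T^*\mathrm{Cl}(f^{-1}(S)) \subseteq f^{-1}(V)$, which is exactly the required condition; and since $S$ and $V$ were arbitrary, the conclusion follows. Note that, unlike Proposition \ref{1}, here I do not claim that $f^{-1}(S)$ itself is $T^*$-closed --- the whole argument is routed through $f^{-1}(V)$, and the regular closedness of $S$ is used only via the containment $S \subseteq V$.

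As for the main obstacle: there is genuinely no deep difficulty here, but the step that actually carries the argument is the pair of order-theoretic facts about the operator $T^*\mathrm{Cl}$ --- that it is monotone and that it fixes every $T^*$-closed set. Both are immediate from the defining formula $T^*\mathrm{Cl}(S) = \cap\{U \mid U \text{ is } T^*\text{-closed},\ U \supseteq S\}$: enlarging $S$ can only shrink the family of $T^*$-closed supersets over which we intersect, so the intersection can only grow, which is monotonicity; and a $T^*$-closed set occurs among its own admissible supersets, hence coincides with its $T^*$-closure. I would state these explicitly, since the definition of weakly almost contra-$T^*$-continuity is phrased through $T^*\mathrm{Cl}$ and the whole proof hinges on these two elementary properties rather than on anything specific to the regular closedness of $S$.
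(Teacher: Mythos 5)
Your proposal is correct and follows essentially the same route as the paper's own proof: apply almost contra-$T^*$-continuity to the outer regular open set $V$ to get $f^{-1}(V)$ $T^*$-closed, then chain $T^*\mathrm{Cl}(f^{-1}(S)) \subseteq T^*\mathrm{Cl}(f^{-1}(V)) \subseteq f^{-1}(V)$. Your only addition is to spell out the monotonicity and idempotence-on-closed-sets properties of $T^*\mathrm{Cl}$, which the paper uses implicitly.
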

\begin{proof}
  Assume that $f: (X, \tau, T) \to (Y, \delta)$ is an almost contra-$T^*$-continuous function. Let $S$ is a regular closed in $Y$ and $V$ is a regular open in $Y$ such that $S \subseteq V \subseteq Y$, since $f$ satisfies the property of almost contra-$T^*$-continuous. Therefore, $f^{-1}(V)$ is $T^*$-closed and therefore, $T^*$Cl$f^{-1}(S)\subseteq$ $T^*$Cl$f^{-1}(V)\subseteq f^{-1}(V).$ We thus obtain $f$ is a weakly almost contra-$T^*$-continuous function.
\end{proof}

\begin{proposition}
  Suppose that $f: (X, \tau, T) \to (Y, \delta)$ is a weakly almost contra-$T^*$-continuous function, then $f$ is slightly contra-$T^*$-continuous.
\end{proposition}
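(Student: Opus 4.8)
The plan is to show that for every clopen subset $V$ of $Y$ the preimage $f^{-1}(V)$ is $T^*$-closed; once this is established the definition of slightly contra-$T^*$-continuity follows at once, since for any $x \in X$ with $f(x) \in V$ one may take $f^{-1}(V)$ itself as the required $T^*$-closed set containing $x$, the inclusion $f(f^{-1}(V)) \subseteq V$ being automatic.

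The key observation driving the argument is that a clopen set is simultaneously regular open and regular closed. Indeed, if $V$ is clopen then $\mathrm{Int}(V) = V = \mathrm{Cl}(V)$, whence $\mathrm{Int(Cl}(V)) = V$ and $\mathrm{Cl(Int}(V)) = V$; the first equality says $V$ is regular open and the second says $V$ is regular closed. This lets me feed a single clopen $V$ into the hypothesis in both roles at once.

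Concretely, I would fix a clopen $V \subseteq Y$ and set $S := V$. Then $S$ is regular closed, $V$ is regular open, and trivially $S \subseteq V$. Applying the weakly almost contra-$T^*$-continuity of $f$ to this pair yields $T^*\mathrm{Cl}(f^{-1}(S)) \subseteq f^{-1}(V)$, that is, $T^*\mathrm{Cl}(f^{-1}(V)) \subseteq f^{-1}(V)$. Since the $T^*$-closure of any set always contains the set, the reverse inclusion $f^{-1}(V) \subseteq T^*\mathrm{Cl}(f^{-1}(V))$ holds as well, so $T^*\mathrm{Cl}(f^{-1}(V)) = f^{-1}(V)$, and therefore $f^{-1}(V)$ is $T^*$-closed.

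I do not anticipate a serious obstacle: the entire proof rests on the single structural fact that clopen sets are both regular open and regular closed, which collapses the two-set hypothesis of weak almost contra-$T^*$-continuity onto one clopen set. The only point to state carefully is the elementary inclusion $f^{-1}(V) \subseteq T^*\mathrm{Cl}(f^{-1}(V))$, which upgrades the inclusion supplied by the hypothesis to the equality witnessing $T^*$-closedness and hence the slight contra-$T^*$-continuity of $f$.
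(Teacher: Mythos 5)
Your proof is correct and follows essentially the same route as the paper: feed the clopen set $V$ into the definition of weak almost contra-$T^*$-continuity in both roles (as the regular closed $S$ and the regular open $V$), obtain $T^*\mathrm{Cl}(f^{-1}(V)) \subseteq f^{-1}(V)$, and conclude $f^{-1}(V)$ is $T^*$-closed. If anything, you are slightly more careful than the paper, which starts from a ``regular clopen'' $V$ without noting, as you do, that every clopen set is automatically both regular open and regular closed.
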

\begin{proof}
  We consider $f: (X, \tau, T) \to (Y, \delta)$ to be weakly almost contra-$T^*$-continuous and $V$ be a regular clopen (i.e., $V$ is regular open and regular closed) subset of $Y$. Then, since $V \subseteq V \subseteq Y$. This implies that $T^*$Cl$f^{-1}(V)\subseteq f^{-1}(V)$. Therefore, $f^{-1}(V)$ is $T^*$-closed and $f$ is a slightly contra-$T^*$-continuous function, as wanted to be shown.\end{proof}

  \begin{corollary}
  If $f: (X, \tau, T) \to (Y, \delta)$ is a weakly contra-$T^*$-continuous function, then $f$ is slightly contra-$T^*$-continuous.
\end{corollary}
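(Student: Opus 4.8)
The plan is to obtain this corollary immediately by chaining two facts already established in the excerpt, rather than arguing afresh from the definitions. The second bullet of Remark~\ref{SV} records that every weakly contra-$T^*$-continuous function is weakly almost contra-$T^*$-continuous, while the proposition directly preceding this corollary shows that every weakly almost contra-$T^*$-continuous function is slightly contra-$T^*$-continuous. Composing these two implications yields the claim with no further work, so the cleanest write-up is a one-line deduction citing Remark~\ref{SV} and the previous proposition.

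If instead a self-contained argument is preferred, I would start from an arbitrary clopen subset $V$ of $Y$ and verify that $V$ is simultaneously regular open and regular closed. Since $V$ is clopen we have $\mathrm{Cl}(V)=V$ and $\mathrm{Int}(V)=V$, whence $\mathrm{Int(Cl}(V))=\mathrm{Int}(V)=V$ and $\mathrm{Cl(Int}(V))=\mathrm{Cl}(V)=V$. Thus $V$ qualifies as both the regular closed set and the regular open set demanded in the hypothesis, and the inclusion $V\subseteq V$ is trivial.

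Next I would feed this $V$ into the weakly contra-$T^*$-continuity hypothesis, taking the closed set $S=V$ and the enclosing open set also equal to $V$, to obtain $T^*\mathrm{Cl}f^{-1}(V)\subseteq f^{-1}(V)$. This inclusion says precisely that $f^{-1}(V)$ is $T^*$-closed. For any $x\in X$ with $f(x)\in V$, the set $f^{-1}(V)$ is then the required $T^*$-closed neighbourhood: it contains $x$ and satisfies $f(f^{-1}(V))\subseteq V$, which is exactly the condition defining slight contra-$T^*$-continuity.

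There is no genuine obstacle here, as the statement is a formal corollary of the two preceding results. The only point deserving a moment's care is the bookkeeping of which clopen-set properties are invoked, namely that clopenness forces both regularity conditions; but this is a one-line verification. I would therefore present the short chained form as the main proof and mention the direct verification only as an optional elaboration.
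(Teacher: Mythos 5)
Your proposal is correct and matches the paper's intent exactly: the paper states this corollary without a separate proof precisely because it follows by chaining the second bullet of Remark~\ref{SV} (weakly contra-$T^*$-continuous $\Rightarrow$ weakly almost contra-$T^*$-continuous) with the immediately preceding proposition (weakly almost contra-$T^*$-continuous $\Rightarrow$ slightly contra-$T^*$-continuous). Your optional direct verification is also sound, since a clopen set is closed, open, regular closed, and regular open, so taking $S=V$ in the weak contra-$T^*$-continuity hypothesis yields $T^*\mathrm{Cl}f^{-1}(V)\subseteq f^{-1}(V)$ as required.
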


Consequently, from what we have already proved, one can consider the following diagram: (C.= continuous)
\\

{\centering
 weakly contra-C.  \\
 $\Downarrow$ \\
weakly contra-$T^*$-C.\\
$\Downarrow$ \\
almost-$T^*$-C. $\Longrightarrow$ weakly almost contra-$T^*$-C. $\Longrightarrow$ slightly contra-$T^*$-C. \\
$\Uparrow$ \\
 \ \ \ \ \ \  \ \ \ \ \ \ \ \ \ \ \  \ \ \ \ \ \ \ \ \ \ \ \ \ \ \ \ \ \ \ \ \ \ \ \ almost contra-$T^*$-C.
}
\newline

The next examples show that, in general, none of the above implications are reversible.
\begin{Example}
\item
  \begin{itemize}

    \item [$\bullet$] Let $f: (\mathbb{R}, \tau, T) \,\to \, (\mathbb{R}, \delta)$ be the identity function such that $\tau =\{\mathbb{R}, \emptyset, \{0\}\}$ and $T:P(\mathbb{R}) \to P(\mathbb{R})$ is defined by $T(S)= \mathrm{Int(Cl(}S))$ and $\delta$ is the usual topology on $\mathbb{R}$. Since $(\mathbb{R}, \delta)$ is connected, $f$ is slightly contra-$T^*$-continuous function. However, $f$ is not weakly almost contra-$T^*$-continuous.
        To check this one can consider $S=[0, 1]$ and $V=(-2, 2)$, then $S$ is regular closed in $(\mathbb{R}, \delta)$ and $V$ is regular open in $(\mathbb{R}, \delta)$ with $S \subseteq V$, but $T^*$Cl$(f^{-1}(S)) = \mathrm{pCl}(f^{-1}(S)) \nsubseteq f^{-1}(V).$
    \item [$\bullet$] Let us consider the same identity function from $(X, \tau, T)$ into $(X, \delta)$, assume that $X=\{a, b, c \}$ have the following topologies $\tau = \{X, \emptyset , \{a\} \}$, and $\delta= \{X, \emptyset, \{c\}, \{a, c\}, \{b, c\}\}$ and $T:P(X) \to P(X) $  defined by $T(S)= \mathrm{Int(Cl}(S))$. Hence, the only regular open sets in $(X, \delta)$ are $X$ and $\emptyset$ where $f$ is weakly almost contra-$T^*$-continuous. Moreover, $f$ is not weakly contra-$T^*$-continuous. Since for $S=\{a\}$ and $V= \{a, c\}$ in the same space $(X, \delta)$, indeed $T^*$Cl$(f^{-1}(S)) = \mathrm{pCl}(f^{-1}(S)) \nsubseteq f^{-1}(V).$
    \item [$\bullet$]Suppose that $f: (X, \tau, T) \to (X, \delta)$ is the identity function, where $X=\{a, b, c\}$ and its topology given by $\tau = \{X, \emptyset , \{a\}, \{b\}, \{a, b\}\}$ such that $T:P(X) \to P(X)$ defined as following $T(S)= \mathrm{Int(Cl}(S))$. Then $f$ is almost $T^*$-continuous but not almost contra-$T^*$-continuous. Note that $S=\{a\}$ is regular open in $X$ and $f^{-1}(S)$ not $T^*$-closed.
    \item [$\bullet$]Let $f: (X, \tau, T) \,\to \, (X, \delta)$ be the identity function, defined by $X=\{a, b, c \}$ where its topologies are described by
    $\tau= \{X, \emptyset, \{c\}\} $ and $\delta= \{X, \emptyset, \{a\}, \{b\}, \{a, b\}\}.$
    As above, $T:P(X) \to P(X)$ is given by the following form $T(S)= \mathrm{Int(Cl}(S))$. Under the above assumptions, $f$ is an almost contra-$T^*$-continuous function. Consequently,  $S=\{a\}$ is regular open in $(X, \delta)$, but $f^{-1}(S)$ is not $T^*$-open.
  \end{itemize}
\end{Example}

\begin{definition} Let $(X, \tau)$ be a topological space, then $X$  is said to be:
\begin{itemize}
  \item [$\bullet$]  {\em $\Sigma$-space} \cite{M11} provided that every open set is the union of regular closed sets.
  \item [$\bullet$]  {\em $R$-compact} \cite{SM} if every regular open cover of $X$ has a finite subcover.
\end{itemize}
\end{definition}
Taking into account the operator topological space $(X, \tau, T)$ we define the following:
\begin{definition}
  Let  $f: (X, \tau, T)$ be an operator topological space, then $X$  is said to be {\em contra $T^*$-compact} if every cover of $X$ by $T^*$-closed sets has a finite subcover.
\end{definition}

\begin{proposition}
 Let $f : (X, \tau, T) \,\to \, (Y, \delta)$  is a weakly almost contra-$T^*$-continuous surjection and let $Y$ be a $\Sigma$-space. If $X$ is contra-$T^*$-compact then $Y$ is $R$-compact.
\end{proposition}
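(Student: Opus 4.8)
The plan is to verify $R$-compactness of $Y$ straight from the definition: starting with an arbitrary regular open cover of $Y$, I would transport it to a cover of $X$ by $T^*$-closed sets, invoke the contra-$T^*$-compactness of $X$ to thin it to a finite subfamily, and then push the corresponding finitely many regular open sets back to $Y$ through the surjection $f$.

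First I would fix a regular open cover $\{V_\alpha\}_{\alpha\in\Lambda}$ of $Y$. Each $V_\alpha$ is in particular open, so because $Y$ is a $\Sigma$-space I may write $V_\alpha=\bigcup_{\beta\in B_\alpha}S_{\alpha\beta}$ as a union of regular closed sets, and necessarily $S_{\alpha\beta}\subseteq V_\alpha$. For every such pair we then have a regular closed set $S_{\alpha\beta}$ contained in the regular open set $V_\alpha$, so the weak almost contra-$T^*$-continuity of $f$ applies and yields $T^*\mathrm{Cl}\,f^{-1}(S_{\alpha\beta})\subseteq f^{-1}(V_\alpha)$.

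Next I would check that the family $\{\,T^*\mathrm{Cl}\,f^{-1}(S_{\alpha\beta})\,\}$ is a cover of $X$ by $T^*$-closed sets. Since $f^{-1}(S_{\alpha\beta})\subseteq T^*\mathrm{Cl}\,f^{-1}(S_{\alpha\beta})$ and $\bigcup_{\alpha,\beta}S_{\alpha\beta}=\bigcup_\alpha V_\alpha=Y$, surjectivity of $f$ gives $\bigcup_{\alpha,\beta}f^{-1}(S_{\alpha\beta})=f^{-1}(Y)=X$, whence the enlarged sets cover $X$ as well. Contra-$T^*$-compactness of $X$ then extracts a finite subfamily, indexed say by $(\alpha_1,\beta_1),\dots,(\alpha_n,\beta_n)$, with $X=\bigcup_{i=1}^{n}T^*\mathrm{Cl}\,f^{-1}(S_{\alpha_i\beta_i})$. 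Applying $f$ and using both $f(X)=Y$ and the inclusions $f\!\left(T^*\mathrm{Cl}\,f^{-1}(S_{\alpha_i\beta_i})\right)\subseteq f(f^{-1}(V_{\alpha_i}))\subseteq V_{\alpha_i}$, I obtain $Y=\bigcup_{i=1}^{n}V_{\alpha_i}$, the desired finite subcover, which is exactly the statement that $Y$ is $R$-compact.

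The step I expect to require the most care is the assertion that each $T^*\mathrm{Cl}\,f^{-1}(S_{\alpha\beta})$ is genuinely $T^*$-closed, so that these sets are admissible members of a $T^*$-closed cover in the definition of contra-$T^*$-compactness. This reduces to showing that an arbitrary intersection of $T^*$-closed sets is again $T^*$-closed, equivalently that an arbitrary union of $T^*$-open sets is $T^*$-open; the latter follows from the monotonicity of the operator $T$ in play here (for instance $T(S)=\mathrm{Int(Cl}(S))$), since $U_i\subseteq T(U_i)\subseteq T(\bigcup_j U_j)$ forces $\bigcup_i U_i\subseteq T(\bigcup_j U_j)$. I would record this compatibility before invoking contra-$T^*$-compactness; everything else in the argument is a routine manipulation of images and preimages.
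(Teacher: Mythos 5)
Your proposal is correct and follows essentially the same route as the paper's proof: use the $\Sigma$-space property to place regular closed sets inside the members of the regular open cover, apply weak almost contra-$T^*$-continuity to obtain a cover of $X$ by sets of the form $T^*\mathrm{Cl}(f^{-1}(S))$, extract a finite subfamily by contra-$T^*$-compactness, and push forward through the surjection to get the finite subcover of $Y$. Your closing remark verifying that $T^*\mathrm{Cl}$ of a set is genuinely $T^*$-closed (via monotonicity of $T$, e.g.\ for $T(S)=\mathrm{Int(Cl}(S))$) is a point the paper silently assumes, so it is a refinement of, not a departure from, the published argument.
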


\begin{proof}
Let $C$ be a cover of $Y$ by regular open sets. Let $y \in Y$ and let $V_y \in C$ such that $y \in V_y$. Since $Y$ is a $\Sigma$-space, there exists a regular closed set $S_y$ such that $y \in S_y \in V_y$. Since $f$ is weakly almost contra-$T^*$-continuous, $T^*$Cl$(f^{−1}(S_y)) \subseteq f^{-1}(V_y)$. It follows that \{$T^*$Cl$(f^{-1}(S_y))| y \in Y \}$ is a cover of $X$ by $T^*$-closed sets. Since $X$ is contra $T^*$-compact, there exists a finite subcover \{$T^*$Cl$(f^{-1}(V_{y_i}))| i = 1, 2, ..., n\}$ It then follows that $$X=\cup^n_{i=1} (f^{-1}(V_{y_i})),$$ $$Y= f(X) = f(\cup^n_{i=1} (f^{-1}(V_{y_i})))=\cup^n_{i=1} (V_{y_i}),$$ this shows that $Y$ is $R$-compact.

\end{proof}
\begin{definition}
  A topological space $(X, \tau)$  is said to be {\em Urysohn} \cite{A6}, if for every pair of distinct points $x$ and $y$ in $X$, there exist open sets $U$ and $V$ such that $x \in U, \ y \in V$ and $\mathrm{Cl}(U) \cap \mathrm{Cl}(V) = \emptyset$.
\end{definition}

\begin{definition} Let $f: (X, \tau, T) \to (Y, \delta)$ be given.
\begin{itemize}
  \item [$\bullet$]  The graph $G(f)$ of a function $f$ is said to be {\em $T^*$-regular} whenever $(x, y) \in X \times Y\setminus G(f)$ there exist a $T^*$-closed set $U$ in $X$ containing $x$ and a regular open set $V$ in $Y$ containing $y$ such that $(U \times V ) \cap G(f) =\emptyset$. This is equivalent to $f(U) \cap V =\emptyset. $
  \item [$\bullet$] $f$ has a {\em contra $T^*$-regular graph} under the condition that for every $(x, y) \in X \times Y \setminus G(f)$ there exist a $T^*$-closed set $U$ in $X$ containing $x$ and a regular closed set $V$ in $Y$ containing $y$ such that $(U \times V ) \cap G(f) =\emptyset$.
  \item [$\bullet$] For any an operator topological space $(X, \tau, T)$  and $S \subseteq X$, we call $S$ {\em generalized $T^*$-regular closed} (briefly g$T^*$r-closed) if it is satisfying $T^*\mathrm{Cl}(S) \subseteq U$ whenever $S \subseteq U$ and $U$ is regular open.
  \item [$\bullet$] $f$ is called {\em approximately $T^*$-regular irresolute} (briefly a$T^*$r-irresolute) if $T^*\mathrm{Cl}(S) \subseteq f^{−1}(V)$ whenever $V$ is regular open, $S$ is g$T^*$r-closed, and $S \subseteq f^{−1}(V).$
  \item [$\bullet$] $f $ is said to be an {\em almost g$T^*$r-continuous function} whenever $f^{-1}(S)$ is $gT^*r$-closed for every regular closed subset $S$ of $Y$.
\end{itemize}
\end{definition}

\begin{proposition}
If the function $f : (X, \tau, T) \,\to \, (Y, \delta)$ is weakly almost contra-$T^*$-continuous and $Y$ is Urysohn, then $G(f)$ has a $T^*$-regular and contra $T^*$-regular graph as well.
\end{proposition}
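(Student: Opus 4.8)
The plan is to handle both assertions with a single construction, exploiting the fact that the Urysohn separation in $Y$ automatically produces a regular closed set surrounding $f(x)$ together with a regular open set that can be fed directly into the weakly almost contra-$T^*$-continuity hypothesis.

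First I would fix an arbitrary point $(x,y) \in X \times Y \setminus G(f)$, so that $f(x) \neq y$. Applying the Urysohn property of $Y$ to the distinct points $f(x)$ and $y$ yields open sets $M, N \subseteq Y$ with $f(x) \in M$, $y \in N$, and $\mathrm{Cl}(M) \cap \mathrm{Cl}(N) = \emptyset$. The key structural observation is that the closure of an open set is always regular closed, so both $\mathrm{Cl}(M)$ and $\mathrm{Cl}(N)$ are regular closed; consequently $Y \setminus \mathrm{Cl}(N)$ is regular open, since the complement of a regular closed set is regular open.

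Since $\mathrm{Cl}(M) \cap \mathrm{Cl}(N) = \emptyset$, we have $\mathrm{Cl}(M) \subseteq Y \setminus \mathrm{Cl}(N)$, so I may invoke the defining inequality of weak almost contra-$T^*$-continuity with the regular closed set $S := \mathrm{Cl}(M)$ and the regular open set $V' := Y \setminus \mathrm{Cl}(N)$, which satisfy $S \subseteq V'$. This gives $U := T^*\mathrm{Cl}(f^{-1}(\mathrm{Cl}(M))) \subseteq f^{-1}(Y \setminus \mathrm{Cl}(N))$. By construction $U$ is $T^*$-closed (exactly as $T^*\mathrm{Cl}(\cdot)$ is used as a producer of $T^*$-closed sets in the $\Sigma$-space proposition), and since $f(x) \in \mathrm{Cl}(M)$ we get $x \in f^{-1}(\mathrm{Cl}(M)) \subseteq U$; moreover $f(U) \subseteq Y \setminus \mathrm{Cl}(N)$.

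Finally I would read off both statements from this single $U$. For the $T^*$-regular graph, take $V := \mathrm{Int}(\mathrm{Cl}(N))$, a regular open set containing $y$; for the contra $T^*$-regular graph, take $V := \mathrm{Cl}(N)$, a regular closed set containing $y$. In either case $V \subseteq \mathrm{Cl}(N)$, which is disjoint from $f(U) \subseteq Y \setminus \mathrm{Cl}(N)$, so $f(U) \cap V = \emptyset$, equivalently $(U \times V) \cap G(f) = \emptyset$, as demanded by each definition. The main obstacle I anticipate is purely the bookkeeping around regularity: one must carefully verify that closures of open sets are regular closed and that complements of regular closed sets are regular open, so that the hypothesis genuinely applies and the separating set $Y \setminus \mathrm{Cl}(N)$ stays disjoint from both $\mathrm{Cl}(N)$ and $\mathrm{Int}(\mathrm{Cl}(N))$; once those identities are in place, the remainder is a direct disjointness check.
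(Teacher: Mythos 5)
Your proof is correct, and for the $T^*$-regular half it is essentially the paper's argument: separate $f(x)$ and $y$ by Urysohn open sets, note that the closure of an open set is regular closed and the complement of a regular closed set is regular open, apply weak almost contra-$T^*$-continuity to $\mathrm{Cl}(M) \subseteq Y \setminus \mathrm{Cl}(N)$, and take $U = T^*\mathrm{Cl}(f^{-1}(\mathrm{Cl}(M)))$ with $V = \mathrm{Int}(\mathrm{Cl}(N))$. Where you genuinely diverge is the contra $T^*$-regular half. The paper re-runs the machinery with the roles of the two Urysohn neighborhoods swapped, applying the hypothesis to $\mathrm{Cl}(V) \subseteq Y \setminus \mathrm{Cl}(W)$, and then concludes with the set $X \setminus T^*\mathrm{Cl}(f^{-1}(\mathrm{Cl}(V)))$ containing $x$ --- which is problematic, since that set is $T^*$-\emph{open} (a complement of a $T^*$-closed set), whereas the definition of a contra $T^*$-regular graph demands a $T^*$-\emph{closed} set around $x$; the paper's displayed conclusion there is also notationally garbled. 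Your route avoids this entirely: you reuse the single set $U$, observe $f(U) \subseteq Y \setminus \mathrm{Cl}(N)$, and simply exchange the neighborhood of $y$ from the regular open $\mathrm{Int}(\mathrm{Cl}(N))$ to the regular closed $\mathrm{Cl}(N)$, both disjoint from $f(U)$. This buys economy (one application of the hypothesis instead of two) and, more importantly, actually discharges the definition that the paper's second half fumbles. One caveat you share with the paper: both arguments tacitly assume $T^*\mathrm{Cl}(S)$ is itself $T^*$-closed, i.e.\ that intersections of $T^*$-closed sets are $T^*$-closed, which holds when $T$ is monotone (as for $T(S)=\mathrm{Int}(\mathrm{Cl}(S))$) but is not automatic for an arbitrary operator; since the paper relies on the same fact, this is not a gap attributable to you.
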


\begin{proof}
Let $(x, y) \in X \times Y\setminus G(f)$. Then, since $y \neq  f(x)$ and $Y$ is Urysohn, there exist open sets $V$ and $W$ in $Y$ such that $y \in V$ and $f(x) \in W$ and $\mathrm{Cl}(V) \cap \mathrm{Cl}(W) = \emptyset$. Then we see that $\mathrm{Cl}(W) \subseteq Y \setminus \mathrm{Cl}(V)$, \ $\mathrm{Cl}(W)$ is regular closed, and $Y \setminus \mathrm{Cl}(V)$ is regular open. Since $f$ is weakly almost contra-$T^*$-continuous, $$T^*\mathrm{Cl}(f^{-1}(\mathrm{Cl}(W))) \subseteq f^{-1}(Y \setminus \mathrm{Cl}(V)).$$ It then follows that $(x, y) \in T^*\mathrm{Cl}(f^{-1}(\mathrm{Cl}(W))) \times \mathrm{Int(Cl}(V))$.

Let $U= T^*\mathrm{Cl}(f^{-1} (\mathrm{Cl}(W)))$, $U$ is $T^*$-closed. Since $\mathrm{Int(Cl}(V))$ is regular open, $$(U \times \mathrm{Int(Cl}(V))) \cap G(f),$$ which proves that $G(f)$ is $T^*$-regular.

To prove the second property,
let $(x, y) \in X \times Y\setminus G(f)$. As an above $y \neq  f(x)$ and $Y$ is Urysohn, there exist open sets $V$ and $W$ in $Y$ such that $y \in V$ and $f(x) \in W$ and $\mathrm{Cl}(V) \cap \mathrm{Cl}(W) = \emptyset$. Therefore $\mathrm{Cl}(V) \subseteq Y \setminus \mathrm{Cl}(W)$, since $\mathrm{Cl}(W)$ is regular closed, and $Y \setminus \mathrm{Cl}(W)$ is regular open. Moreover,  $f$ is weakly almost contra-$T^*$-continuous, we obtain $$T^*\mathrm{Cl}(f^{-1}(\mathrm{Cl}(V))) \subseteq f^{-1}(Y \setminus \mathrm{Cl}(W)).$$ It then follows that $$(x, y) \in (X \setminus T^*\mathrm{Cl}(f^{-1}(\mathrm{Cl}(V)))) \subseteq X \times Y \setminus G(f)$$ and we have that $G(f)$ is a contra $T^*$-regular graph.

\end{proof}

\begin{proposition}
 Let $f : (X, \tau, T) \to (Y, \delta)$ be weakly almost contra-$T^*$-continuous and the images of g$T^*$r-closed sets are regular closed, then $f$ is a$T^*$r-irresolute.
\end{proposition}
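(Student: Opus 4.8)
The plan is to verify the definition of a$T^*$r-irresolute directly. I would fix a regular open set $V$ in $Y$ together with a g$T^*$r-closed set $S$ in $X$ satisfying $S \subseteq f^{-1}(V)$, and aim to deduce $T^*\mathrm{Cl}(S) \subseteq f^{-1}(V)$. The guiding idea is to transport the problem from $X$ to $Y$ by passing to the image $f(S)$, where the weak almost contra-$T^*$-continuity hypothesis can be brought to bear.

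First I would push $S$ forward. From $S \subseteq f^{-1}(V)$ it follows immediately that $f(S) \subseteq V$. By the standing assumption that images of g$T^*$r-closed sets are regular closed, the set $f(S)$ is regular closed in $Y$. Hence $f(S)$ is a regular closed subset of $Y$, $V$ is regular open, and $f(S) \subseteq V$ --- precisely the configuration to which the definition of weak almost contra-$T^*$-continuity applies. Applying that hypothesis to the pair $(f(S), V)$ yields
$$T^*\mathrm{Cl}(f^{-1}(f(S))) \subseteq f^{-1}(V).$$

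It then remains to pull this estimate back from $f^{-1}(f(S))$ to $S$ itself. Since $S \subseteq f^{-1}(f(S))$ always holds, and since the operator $T^*\mathrm{Cl}$ is monotone with respect to inclusion (enlarging a set shrinks the family of $T^*$-closed supersets over which its $T^*$-closure is defined, and the intersection of a smaller family is larger, so $A \subseteq B$ forces $T^*\mathrm{Cl}(A) \subseteq T^*\mathrm{Cl}(B)$), I obtain
$$T^*\mathrm{Cl}(S) \subseteq T^*\mathrm{Cl}(f^{-1}(f(S))) \subseteq f^{-1}(V),$$
which is exactly the defining condition for $f$ to be a$T^*$r-irresolute.

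The only substantive choice in the argument is to feed the image $f(S)$ into the weak almost contra-$T^*$-continuity condition; everything else is bookkeeping. The point I expect to be most delicate --- though not truly an obstacle --- is that $f^{-1}(f(S))$ and $S$ need not coincide, so I deliberately rely on the one-sided containment $S \subseteq f^{-1}(f(S))$ together with monotonicity of $T^*\mathrm{Cl}$ rather than any equality. The hypothesis on images of g$T^*$r-closed sets is tailored precisely to convert $f(S)$ into a regular closed set, so no further difficulty should arise.
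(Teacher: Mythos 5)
Your proposal is correct and follows exactly the same route as the paper's own proof: push $S$ forward to the regular closed set $f(S) \subseteq V$, apply weak almost contra-$T^*$-continuity to get $T^*\mathrm{Cl}(f^{-1}(f(S))) \subseteq f^{-1}(V)$, and pull back to $S$. In fact your write-up is slightly more careful than the paper's, since you explicitly justify the final step via $S \subseteq f^{-1}(f(S))$ and monotonicity of $T^*\mathrm{Cl}$, a point the paper passes over with a bare ``therefore.''
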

\begin{proof}
  Let $V$ be a regular open subset of $Y$ and let $S$ be a g$T^*$r-closed subset of $X$ such that $S \subseteq f^{-1}(V)$. Then $f(S)$ is regular closed and $f(S) \subseteq V$. Since $f$ is weakly almost contra-$T^*$-continuous, $T^*$Cl($f^{-1}(f(S)))\subseteq f^{-1}(V)$. Therefore $T^*$Cl(S)$\subseteq f^{-1}(V)$ and hence $f$ is $aT^*r$-irresolute.
\end{proof}

\begin{proposition}
  If $f : (X, \tau, T) \to (Y, \delta)$ is almost g$T^*$r-continuous and a$T^*$r-irresolute, then $f$ is weakly almost contra-$T^*$-continuous.
\end{proposition}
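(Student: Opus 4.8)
The plan is to verify the defining inequality of weakly almost contra-$T^*$-continuity directly, by chaining the two hypotheses and using the preimage $f^{-1}(S)$ as the bridging g$T^*$r-closed set. First I would fix a regular closed subset $S$ of $Y$ and a regular open subset $V$ of $Y$ with $S \subseteq V$; the goal is then exactly to establish $T^*\mathrm{Cl}(f^{-1}(S)) \subseteq f^{-1}(V)$.

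Next, since $f$ is almost g$T^*$r-continuous and $S$ is regular closed, the preimage $f^{-1}(S)$ is g$T^*$r-closed in $X$ by definition. Moreover, from $S \subseteq V$ I immediately obtain $f^{-1}(S) \subseteq f^{-1}(V)$ by monotonicity of preimages. These two facts furnish precisely the ingredients needed to feed into the irresoluteness hypothesis.

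Now I would invoke the a$T^*$r-irresolute property, taking the g$T^*$r-closed set to be $f^{-1}(S)$ and the regular open set to be $V$. Its three hypotheses are all met: $V$ is regular open, $f^{-1}(S)$ is g$T^*$r-closed, and $f^{-1}(S) \subseteq f^{-1}(V)$. It therefore yields $T^*\mathrm{Cl}(f^{-1}(S)) \subseteq f^{-1}(V)$, which is exactly the conclusion required, so $f$ is weakly almost contra-$T^*$-continuous.

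I do not anticipate a genuine obstacle here: the statement is a direct composition of the two definitions, and the only point requiring care is the bookkeeping of which set plays which role when applying a$T^*$r-irresoluteness. Specifically, one must recognize that the set $f^{-1}(S)$, produced in $X$ by almost g$T^*$r-continuity, is precisely the g$T^*$r-closed subset to which the irresoluteness condition should be applied, with $V$ serving as the witnessing regular open set.
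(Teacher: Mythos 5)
Your proof is correct and follows essentially the same route as the paper: apply almost g$T^*$r-continuity to get that $f^{-1}(S)$ is g$T^*$r-closed, note $f^{-1}(S) \subseteq f^{-1}(V)$, and then invoke a$T^*$r-irresoluteness to conclude $T^*\mathrm{Cl}(f^{-1}(S)) \subseteq f^{-1}(V)$. In fact your write-up is cleaner, since the paper's final displayed inclusion contains a typographical garble ($T^*\mathrm{Cl}(f^{-1}(f(A)))$ where $T^*\mathrm{Cl}(f^{-1}(S))$ is meant) that your version states correctly.
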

\begin{proof}
  Assume $S \subseteq V \subseteq Y$, where $S$ is regular closed in $Y$ and $V$ is regular open in $Y$. Since $f$ is almost g$T^*$r-continuous, $f^{-1}(S)$ is g$T^*$r-closed. Then, since $f^{-1}(S) \subseteq f^{-1}(V)$ and $f$ is a$T^*$r-irresolute, $T^*$Cl($f −1(f(A)))\subseteq f^{-1}(V)$, which proves that $f$ is weakly almost contra-$T^*$-continuous.
\end{proof}

\section*{Acknowledgement}

I offer my sincerest gratitude to my supervisor Dr. L\'{a}szl\'{o} Kozma,  for carefully reviewing the work, providing useful suggestions.

\end{document}